\theoremstyle{plain} 
\newtheorem{thm}{ Theorem}[section]
\newtheorem{lem}[thm]{Lemma}
\newtheorem{prop}[thm]{Proposition}
\newtheorem{cor}[thm]{Corollary}
\newtheorem{defn}[thm]{Definition}
\newtheorem{ex}[thm]{Example}
\newtheorem* {note*}{Note}
\newtheorem{qu}{\bf Question}
\newcommand{\iy} {\infty}
\newcommand{\N} {\mathbb{N}}
\newcommand{\Z} {\mathbb{Z}}
\newcommand{\lm}{\lambda}
\newcommand{\de}{\delta}
\newcommand{\eps}{\epsilon}
\newcommand{\h}{\mathcal H}
\newcommand{\F}{\mathcal F}
\newcommand{\G}{\mathcal G}
\newcommand{\BH}{\mathcal B(\mathcal H)}
\newcommand{\norm}[1]{\left\Vert#1\right\Vert}
\newcommand{\set}[1]{\left\{#1\right\}}
\newcommand{\brc}[1]{\left(#1\right)}
\newcommand{\LZ}{\ell^{2}(\mathbb Z)}
\newcommand{\LN}{\ell^{2}(\mathbb N)}
\newcommand{\LNp}{\ell^{p}(\mathbb N)}
\newcommand{\m}{\mathcal {M}}
\title{\bf \Large Subspace-hypercyclic weighted shifts}
\author[1]{\bf\footnotesize Nareen Bamerni \thanks{nareen\_bamerni@yahoo.com}}
\author[2]{\bf Adem K{\i}l{\i}\c{c}man \thanks{akilicman@yahoo.com}}
\affil[1,2]{\bf Department of Mathematics, University Putra Malaysia,
43400 UPM, Serdang, Selangor, Malaysia}
\begin{document}
\date{}
\maketitle
\begin{abstract}
Our aim in this paper is to obtain necessary and sufficient conditions for weighted shift operators on the Hilbert spaces $\LZ$ and $\LN$ to be subspace-transitive, consequently,
 we show that the Herrero question \cite{limits} holds true even on a subspace of a Hilbert space, i.e. there exists an operator $T$ such that both $T$ and $T^*$ are subspace-hypercyclic operators for some subspaces. We display the conditions on the direct sum of two invertable bilateral forward weighted shift operators to be subspace-hypercyclic.
\end{abstract}
\section{Introduction}
A bounded linear operator $T$ on a separable Hilbert space $\h$ is hypercyclic if there is a vector $x\in \h$ such that $Orb(T,x)=\set{T^nx:n\ge 0}$ is dense in $\h$, such a vector $x$ is called  hypercyclic for $T$. The first example of a hypercyclic operator on a Banach space was constructed by Rolewicz in 1969 \cite{Rolewicz}. He showed that if $B$ 
is the backward shift on $\LNp$ then $\lm B$ is hypercyclic if and only if $|\lm|> 1$. The hypercyclicity concept was probabely born with the thesis of Kitai in 1982 \cite{Kitai} who introduced the hypercyclic criterion to ensure the existence of hypercyclic operators. For more information on hypercyclic operators we refer the reader to \cite{dynamic, Erdman}. \\

In 1991, Herrero \cite{limits} asked for the existence of a hypercyclic operator $T$ such that its adjoint is also hypercyclic, in the same year, Sales \cite{3} constructed such an example.\\

In 1995, Salas \cite{4} gave necessary and sufficient conditions for weighted shift operators to be hypercyclic or supercyclic and consequently, he found another example supporting Herrero's question.  However, that characterization was so complicated; therefore, Feldman \cite{H C Bi.} found some simpler conditions for the invertable shift case and consequently, he showed that the same conditions hold also for non invertable case but by considering that the negative or positive weights are bounded below. \\

In 2011,  Madore and Mart\'{i}nez-Avenda\~{n}o \cite{sub hyp} studied the density of the orbit in a non-trivial subspace instead of the whole space and called that phenomenon the subspace-hypercyclicity. For the series of references on subspaces-hypercyclic operators see \cite{Some ques, C.M, sub hyp, notes on sub}

\begin{defn}\cite{sub hyp}
Let $T\in \BH$ and $\m$ be a closed subspace of $\h$. Then $T$ is called $\m$-hypercyclic or subspace-hypercyclic operator for a subspace $\m$  if there exists a vector $x\in \h$ such that $Orb(T,x)\cap \m $ is dense in $\m$. We call $x$ an $\m$-hypercyclic vector for $T$.
\end{defn}

\begin{defn}\cite{sub hyp}
Let $T \in \BH$ and $\m$ be a closed subspace of $\h$. Then $T$ is called $\m$-transitive or subspace-transitive for a subspace $\m$ if for each pair of non-empty open sets $U_1,\,U_2$ of $\m$ there exists an $n \in \N$ such that $T^{-n}U_1 \cap U_2$ contains a non-empty relatively open set in $\m$.
\end{defn}

\begin{thm}\cite{sub hyp}\label{MTD}
Every $\m$-transitive operator on $\h$ is $\m$-hypercyclic. 
\end{thm}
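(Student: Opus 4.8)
The plan is to reproduce the proof of the classical Birkhoff transitivity theorem, with the single adjustment that the subspace topology of $\m$ forces on us. Since $\h$ is separable and $\m$ is a \emph{closed} subspace, $\m$ is itself a nonempty complete separable metric space, hence a Baire space; fix a countable base $\set{V_j}_{j\in\N}$ of nonempty relatively open subsets of $\m$. For $j,n\in\N$ set
$$B_{j,n}=\set{x\in\m : T^nx\in V_j},\qquad D_j=\bigcup_{n\ge 1}\operatorname{int}_\m\brc{B_{j,n}},$$
where $\operatorname{int}_\m$ is the interior taken in $\m$, so that each $D_j$ is relatively open in $\m$. I will check that every $x\in\bigcap_{j}D_j$ is an $\m$-hypercyclic vector: for each $j$ there is $n\ge 1$ with $x\in\operatorname{int}_\m(B_{j,n})\subseteq B_{j,n}$, hence $T^nx\in V_j\subseteq\m$ and so $T^nx\in Orb(T,x)\cap\m$; since $\set{V_j}$ is a base for $\m$, this yields that $Orb(T,x)\cap\m$ is dense in $\m$.

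It then remains to produce a point of $\bigcap_j D_j$, and by the Baire category theorem in $\m$ this reduces to showing that each $D_j$ is dense in $\m$ — this is the only place where the hypothesis is used. Let $U\subseteq\m$ be an arbitrary nonempty relatively open set. Applying the definition of $\m$-transitivity to the pair $(V_j,U)$ gives some $n\in\N$ for which $T^{-n}V_j\cap U=\set{x\in U : T^nx\in V_j}$ contains a nonempty set that is relatively open in $\m$. As $\set{x\in U : T^nx\in V_j}=B_{j,n}\cap U$, this says $\operatorname{int}_\m\brc{B_{j,n}\cap U}\ne\emptyset$, and since $\operatorname{int}_\m\brc{B_{j,n}\cap U}\subseteq\operatorname{int}_\m\brc{B_{j,n}}\cap U$ we get $D_j\cap U\ne\emptyset$; as $U$ was arbitrary, $D_j$ is dense.

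The one genuine subtlety — the reason this is not a verbatim transcription of Birkhoff's argument — is that $B_{j,n}$ need not be relatively open in $\m$. Indeed $B_{j,n}=\m\cap T^{-n}(\m)\cap T^{-n}(V_j)$, and although $T^{-n}(V_j)$ is open in $\h$, the factor $\m\cap T^{-n}(\m)$ is in general only closed; in the classical case $\m=\h$ this factor is everything and the issue disappears. Passing to $\operatorname{int}_\m$ is what repairs this, and one sees that the definition of $\m$-transitivity has been framed exactly so as to hand us nonempty relatively open sets rather than bare orbit points, which is precisely the input this Baire-category scheme needs. The remaining items to verify are entirely routine: that a closed subspace of a separable Hilbert space is a nonempty complete separable metric space (so that Baire applies), and the elementary inclusion $\operatorname{int}_\m\brc{B_{j,n}\cap U}\subseteq\operatorname{int}_\m\brc{B_{j,n}}\cap U$.
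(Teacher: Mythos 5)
Your proof is correct: the paper itself gives no proof of this theorem (it is quoted from the reference \cite{sub hyp}), and your Baire-category argument --- passing to $\operatorname{int}_\m$ of the sets $T^{-n}V_j\cap\m$ precisely because $T^{-n}(\m)\cap\m$ is only closed, and using $\m$-transitivity to make each $D_j$ dense --- is exactly the adaptation of Birkhoff's transitivity theorem used in that reference. Nothing is missing; you even get the stronger conclusion that the $\m$-hypercyclic vectors form a dense $G_\delta$ subset of $\m$.
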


\begin{prop}\label{chMD}\cite{m1}
Let $T\in\BH$. The following statements are equivalent:
\begin{enumerate}
 	\item $T$ is $\m$-transitive.
  \item for all $x,y\in \m$, there exist sequeces $\set{x_k}\in \m$ and $\set{n_k}\in \N$  such that for all $k\ge 1$, $T^{n_k}\m\subseteq \m$ and as $k\to \iy$, $\set{x_k}\to x$ and $T^{n_k}x_k\to y$.
  \item for each $x,y\in\m$ and each neighborhood $W$ for
  zero in $\m$, there exist $z\in\m$ and $n\in\N$;
  such that $x-z\in W$, $T^n z-y\in W$ and $T^n\m\subseteq \m$.
\end{enumerate}
\end{prop}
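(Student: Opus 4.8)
The plan is to transplant the classical Birkhoff transitivity argument into the metric space $\m$ itself and to prove the cycle of implications $(1)\Rightarrow(2)\Rightarrow(3)\Rightarrow(1)$. Throughout I would equip $\m$ with the metric inherited from $\h$ and abbreviate $B(w,\eps)=\set{u\in\h:\norm{u-w}<\eps}$, so that the sets $B(w,\eps)\cap\m$ form a base of relatively open neighbourhoods of a point $w\in\m$.

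For $(1)\Rightarrow(2)$ I would fix $x,y\in\m$ and, for each $k\ge1$, apply $\m$-transitivity to the non-empty relatively open sets $U_1=B(y,1/k)\cap\m$ and $U_2=B(x,1/k)\cap\m$ to obtain $n_k\in\N$ such that $T^{-n_k}U_1\cap U_2$ contains a non-empty relatively open subset $V_k$ of $\m$. Picking any $x_k\in V_k$ gives $\norm{x_k-x}<1/k$ and $\norm{T^{n_k}x_k-y}<1/k$, hence $x_k\to x$ and $T^{n_k}x_k\to y$. The step that needs care is the extra requirement $T^{n_k}\m\subseteq\m$: since $V_k$ is relatively open one may choose $v_0\in V_k$ and $r>0$ with $v_0+(B(0,r)\cap\m)\subseteq V_k$; then $T^{n_k}V_k\subseteq U_1\subseteq\m$ forces $T^{n_k}v_0\in\m$ (take the centre) and hence $T^{n_k}w\in\m$ for every $w\in\m$ with $\norm{w}<r$, so by scaling $T^{n_k}\m\subseteq\m$.

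The implication $(2)\Rightarrow(3)$ is then just the translation of a sequential statement into a neighbourhood statement: given $x,y\in\m$ and a neighbourhood $W$ of $0$ in $\m$, I would take the sequences $\set{x_k}$, $\set{n_k}$ from $(2)$, observe that $x_k-x\to0$ and $T^{n_k}x_k-y\to0$, so that for all large $k$ both differences lie in $W$, and then fix such a $k$ and set $z=x_k$, $n=n_k$, with $T^n\m\subseteq\m$ already guaranteed by $(2)$. For $(3)\Rightarrow(1)$, given non-empty relatively open $U_1,U_2\subseteq\m$, I would pick $y\in U_1$, $x\in U_2$ and $\eps>0$ with $B(y,\eps)\cap\m\subseteq U_1$ and $B(x,\eps)\cap\m\subseteq U_2$, then apply $(3)$ with $W=B(0,\eps)\cap\m$ to get $z\in\m$ and $n\in\N$ with $x-z\in W$, $T^nz-y\in W$ and $T^n\m\subseteq\m$; this places $z$ in $T^{-n}U_1\cap U_2$, and since $T^n\m\subseteq\m$ the restriction $T^n|_\m\colon\m\to\m$ is continuous, so $T^{-n}U_1\cap\m=(T^n|_\m)^{-1}(U_1)$ is relatively open in $\m$, and hence so is $T^{-n}U_1\cap U_2\ni z$.

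The main obstacle, recurring in both $(1)\Rightarrow(2)$ and $(3)\Rightarrow(1)$, is the interplay between the phrasing ``$T^{-n}U_1\cap U_2$ contains a non-empty relatively open set of $\m$'' and the conjunction ``$z\in T^{-n}U_1\cap U_2$ and $T^n\m\subseteq\m$'': in one direction one must manufacture the inclusion $T^n\m\subseteq\m$ from the bare existence of a relatively open piece $V$ with $T^nV\subseteq\m$, which is exactly the affine-subspace computation above, while in the other direction one needs $T^n\m\subseteq\m$ precisely in order to know that $T^n|_\m$ is a self-map of $\m$, so that a single witnessing orbit point can be fattened to a relatively open set. Everything else reduces to routine estimates with the metric of $\m$.
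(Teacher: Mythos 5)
Your argument is correct. Note that the paper itself gives no proof of this proposition --- it is quoted from reference \cite{m1} --- so there is nothing internal to compare against; your cycle $(1)\Rightarrow(2)\Rightarrow(3)\Rightarrow(1)$ is the standard Birkhoff-type argument used in the subspace-hypercyclicity literature, and you correctly handle the one genuinely non-routine point, namely extracting $T^{n}\m\subseteq\m$ from the existence of a relatively open $V$ with $T^{n}V\subseteq\m$ by differencing against a centre point and scaling within the linear subspace $\m$.
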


In the present paper, we follow the line of Sales's proofs \cite{4} and Feldman's proofs \cite{H C Bi.} to give the main results of this paper. Throught Theorem  \ref{forw} (Proposition \ref{bac}), we display necessary and sufficient conditions for an invertable bilateral forward (backward) weighted shift on a Hilbert space $\LZ$ to be a subspace-transitive operator. Then, Theorem \ref{84} (Proposition \ref{85}) shows that the same conditions still hold for non invertable bilateral forward (backward) weighted shifts but under an extra condition. Consequently, we show that the Herrero question \cite{limits} still holds for subspace-hypercyclic operators; i.e, there is an operator $T$ such that both $T$ and its adjoint are subspace-hypercyclic for some subspaces; however, we dont know whether they are subspace-hypercyclic for the same subspace or not. Also, we  characterize the direct sum of bilateral forward weighted shifts that are subspace-hypercyclic in terms of their weighte sequences. \\ 
Finally, we characterize unilateral backward weighted shift operators that are subspace-hypercyclic in term of their weight sequences.
\section{Main results}
All subspaces $\m$ in this section are supposed to be topologically closed. We will suppose that $$\F=\set{m_r: r\in \N \mbox { and } e_{m_r}\in \mbox { Schauder basis for } \m} $$ 

 The next two Theorems give necessary and sufficient conditions for a bilateral weighted shift operator on the Hilbert space $\LZ$ to be $\m$-transitive. \\

Let $T$ be the bilateral forward weighted shift operator with a weigh sequence $\set{w_n}$, then  
$T(e_{r})=w_{r}e_{r+1}$ for all $r\in \Z$. Let $S$ be the right inverse (backward shift) to $T$ and be defined as follows: $S(e_{r})=\frac{1}{w_{r-1}}e_{r-1}$. Observe that $TSe_{r}=e_{r}$ for all $r\in \Z$. If $T$ is invertable then $T^{-1}=S$. 

$$\displaystyle T^k(e_{m_r})=\brc{\prod_{j=m_r}^{m_r+k-1}w_j}e_{m_r+k} \mbox{	and } \displaystyle S^k(e_{m_r})=\brc{\prod_{j=m_r-1}^{m_r-k}\frac{1}{w_{j}}}e_{m_r-k}$$
The proof of some results in this section follow the lines of \cite{4} and \cite{H C Bi.}.

\begin{thm} \label{19}
Let $T$ be a bilateral forward weighted shift in the Hilbert space $\LZ$ with a positive weight sequence
$\set{w_n}_{n\in\Z}$ and $\m$ be a subspace of $\LZ$. If for a given $\delta>0$ and $q\in \N$, there exist an arbitrary large positive number $n$ such that $T^n\m \subseteq \m$.
Then $T$ is $\m$-transitive if and only if for all  $m_j\in \F$ with $|m_j|\le q$

$$\displaystyle\prod_{k=m_j}^{m_j+n-1}w_k<\delta \mbox{	and } \prod_{k=1-m_j}^{n-m_j}\frac{1}{w_{-k}}<\delta$$
\end{thm}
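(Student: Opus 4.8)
The plan is to extract both implications from the transitivity criterion of Proposition~\ref{chMD}, in the spirit of the bilateral weighted shift arguments of Salas and Feldman. The organizing remark is that the two products in the statement are exactly $\norm{T^ne_{m_j}}=\prod_{k=m_j}^{m_j+n-1}w_k$ and $\norm{S^ne_{m_j}}=\prod_{k=m_j-n}^{m_j-1}\frac{1}{w_k}=\prod_{k=1-m_j}^{n-m_j}\frac{1}{w_{-k}}$; thus ``both products $<\delta$ for $\abs{m_j}\le q$'' says that $T^n$ contracts the finite ``low-index'' part $\set{e_{m_j}:\abs{m_j}\le q}$ of the basis while $S^n$ (the right inverse, $TS=I$) returns an arbitrarily small preimage of any vector supported there. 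The standing assumption supplies arbitrarily large $n$ with $T^n\m\subseteq\m$, which is needed both to invoke Proposition~\ref{chMD} and to keep the auxiliary vectors inside $\m$.

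\emph{Sufficiency.} Assume the product bounds hold. Let $x,y\in\m$ and let $W$ be a zero-neighbourhood in $\m$; fix $\eta>0$ with $\set{v\in\m:\norm{v}<\eta}\subseteq W$. As $\set{e_{m_r}}$ is a Schauder basis of $\m$, choose $q\in\N$ and finite sums $\hat x=\sum_{m_j\in\F,\,\abs{m_j}\le q}a_je_{m_j}$ and $\hat y=\sum_{m_j\in\F,\,\abs{m_j}\le q}b_je_{m_j}$ with $\norm{x-\hat x}<\eta/3$ and $\norm{y-\hat y}<\eta/3$. Pick $\delta>0$ with $\delta\sum\abs{a_j}<\eta/3$ and $\delta\sum\abs{b_j}<\eta/3$, and then a (large) $n$ with $T^n\m\subseteq\m$ for which both products are $<\delta$ for all the indices above. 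Set $z=\hat x+S^n\hat y$. Then $\norm{S^n\hat y}\le\sum\abs{b_j}\,\norm{S^ne_{m_j}}<\delta\sum\abs{b_j}<\eta/3$, so $x-z=(x-\hat x)-S^n\hat y\in W$; and $T^nz=T^n\hat x+T^nS^n\hat y=T^n\hat x+\hat y$ with $\norm{T^n\hat x}\le\sum\abs{a_j}\,\norm{T^ne_{m_j}}<\delta\sum\abs{a_j}<\eta/3$, so $T^nz-y=(\hat y-y)+T^n\hat x\in W$. Since also $T^n\m\subseteq\m$, Proposition~\ref{chMD}(3) gives that $T$ is $\m$-transitive. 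The sole point needing care is $z\in\m$: as $S^ne_{m_j}$ is a scalar multiple of $e_{m_j-n}$, one must know $e_{m_j-n}\in\m$ for the indices appearing in $\hat y$, which I will deduce from the structure of $\m$ together with $T^n\m\subseteq\m$ for this $n$.

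\emph{Necessity.} Suppose $T$ is $\m$-transitive and fix $\delta>0$, $q\in\N$; pick $\eps\in(0,1)$ with $\eps/(1-\eps)<\delta$. Apply Proposition~\ref{chMD}(2) to $x=y=\sum_{m_j\in\F,\,\abs{m_j}\le q}e_{m_j}\in\m$: there are $x_p\to x$ in $\m$ and $n_p\in\N$ with $T^{n_p}\m\subseteq\m$ and $T^{n_p}x_p\to y$. The $n_p$ are unbounded --- otherwise along a subsequence $n_p\equiv n\ge1$, forcing $T^nx=y=x$, which is impossible because $T^nx$ is supported on $\set{m_j+n:m_j\in\F,\abs{m_j}\le q}$. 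Passing to a subsequence, assume $n_p>2q$; fix $p$ large, and write $z=x_p=\sum_{m_i\in\F}c_ie_{m_i}$, so $\norm{x-z}<\eps$ (hence $\abs{c_j}>1-\eps$ for $\abs{m_j}\le q$ and $\sum_{\abs{m_i}>q}\abs{c_i}^2<\eps^2$) and $\norm{T^{n_p}z-y}<\eps$. The coefficient of $e_{m_j+n_p}$ is $c_j\prod_{k=m_j}^{m_j+n_p-1}w_k$ in $T^{n_p}z$ and $0$ in $y$ (as $\abs{m_j+n_p}>q$), so $\eps>\abs{c_j}\prod_{k=m_j}^{m_j+n_p-1}w_k$ yields $\prod_{k=m_j}^{m_j+n_p-1}w_k<\eps/(1-\eps)<\delta$. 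The coefficient of $e_{m_j}$ in $T^{n_p}z$ can only come from $e_{m_j-n_p}$, so $m_j-n_p\in\F$ (otherwise it vanishes while $y$'s coefficient is $1$, contradicting $\eps<1$); moreover $\abs{m_j-n_p}>q$, so $\abs{c_{m_j-n_p}}<\eps$, and $\abs{c_{m_j-n_p}\prod_{k=m_j-n_p}^{m_j-1}w_k-1}<\eps$ forces $\prod_{k=m_j-n_p}^{m_j-1}\frac{1}{w_k}<\eps/(1-\eps)<\delta$; the substitution $k\mapsto-k$ rewrites this product as $\prod_{k=1-m_j}^{n_p-m_j}\frac{1}{w_{-k}}$. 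Since $n_p\to\iy$ and $T^{n_p}\m\subseteq\m$, this is the desired conclusion.

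\emph{Main obstacles.} The estimates themselves are routine; the two load-bearing steps are: (i) in the necessity direction, the observation that to produce the target coefficient $1$ at $e_{m_j}$ the preimage index $m_j-n_p$ must belong to $\F$ --- this is precisely what converts the \emph{upper} bound $\norm{x_p}\lesssim\eps$ into a \emph{lower} bound for $\prod_{k=m_j-n_p}^{m_j-1}w_k$, giving the second inequality; and (ii) in the sufficiency direction, ensuring the corrector $S^n\hat y$ lies in $\m$, which is where the precise hypothesis (that $\m$ is the closed span of a sub-basis and $T^n\m\subseteq\m$ for arbitrarily large $n$) is used, not merely closedness of $\m$. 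The remaining bookkeeping --- a single $n$ serving all $\abs{m_j}\le q$ and both products at once, and arbitrarily large --- is dealt with by taking $x=y$ and $n_p>2q$ as above.
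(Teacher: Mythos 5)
Your argument follows the same route as the paper's. The necessity half extracts the two product bounds by comparing coefficients of $T^{n}x$ against the target $\sum_{|m_j|\le q}e_{m_j}$ exactly as the paper does; your version is in fact more careful, since you justify that the $n_p$ are unbounded and you note explicitly that $m_j-n_p$ must lie in $\F$ for the coefficient of $e_{m_j}$ in $T^{n_p}x_p$ to be nonzero, a point the paper uses silently (the paper also has a harmless typo, writing $|x_{m_j-n}|\,|\prod w -1|<\eps$ where $|x_{m_j-n}\prod w -1|<\eps$ is meant, but the intended estimate is the one you derive). The sufficiency half is likewise the paper's: the paper bounds $\norm{T^nx}$ and $\norm{S^ny}$ on the span of $\set{e_{m_j}:|m_j|\le q}$ and then invokes Salas's Lemma 2.2 ``adapted to the subspace-transitivity case''; your construction $z=\hat x+S^n\hat y$ fed into Proposition \ref{chMD}(3) is precisely the content of that lemma written out.

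The one step you defer, however --- showing $S^n\hat y\in\m$ --- is a genuine gap, and it cannot be closed in the way you propose. The hypothesis $T^n\m\subseteq\m$ says that $m_r+n\in\F$ whenever $m_r\in\F$; it gives no information about $m_j-n$. For instance, if $\m$ is the closed span of $\set{e_k:k\ge 0}$, then $T^n\m\subseteq\m$ for every $n$, yet $e_{m_j-n}\notin\m$ as soon as $n>m_j$, so the corrector $S^n\hat y$ leaves $\m$ and Proposition \ref{chMD}(3) cannot be applied to it. This is exactly where ``adapting Lemma 2.2 to the subspace case'' is nontrivial: in the whole-space setting $S^ny$ automatically lives in $\ell^2(\Z)$, whereas here backward invariance of $\m$ under the shift is an additional demand not implied by forward invariance. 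The paper's own proof elides the same point, so your proposal is faithful to it; but to genuinely complete the sufficiency direction you would need either an extra hypothesis (for example $S^{n}\m\subseteq\m$ along the same $n$'s, or $\F$ closed under $k\mapsto k-n$) or a different choice of approximating vector that is guaranteed to stay in $\m$.
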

\begin{proof}
Let $T$ be an $\m$-transitive operator, then by Proposition \ref{chMD} there exist a vector $x\in \m$ such that for $0<\eps<1$
\begin{eqnarray}\label{15}
\norm{x-\sum_{\stackrel{|m_j|\le q}{m_j\in\F }}e_{m_j}}<\eps
\end{eqnarray}
Also one can find a large $n$; $n>2q$, such that $T^n\m \subseteq \m$ and  
\begin{eqnarray}\label{16}
\norm{T^nx-\sum_{\stackrel{|m_j|\le q}{m_j\in\F }}e_{m_j}}<\eps
\end{eqnarray}
Inquality (\ref{15}) implies that $|x_{m_j}|>1-\eps$ if $|m_j|\le q$ and $|x_{m_j}|<\eps$ otherwise. Since $n>2q$ inquality (\ref{16}) implies that for  $|m_j|\le q$
$$\displaystyle|x_{m_j}|\norm{T^ne_{m_j}}=|x_{m_j}|\displaystyle \left(\prod_{k=0}^{n-1}w_{k+m_j}\right)<\eps $$
It follows that
 \begin{eqnarray}\label{17}
 \left(\prod_{k=0}^{n-1}w_{k+m_j}\right)< \frac{\eps}{|x_{m_j}|}<\frac{\eps}{1-\eps}<\delta
\end{eqnarray}
Also inquality (\ref{16}) implies that $\norm{x_{m_j-n}(T^ne_{m_j-n})-e_{m_j}}< \eps$ for $|m_j|\le q$. Thus $$\left|x_{m_j-n}\right| \left|\displaystyle \prod_{k=0}^{n-1}w_{m_j-n+k}-1\right|=\left|x_{m_j-n}\right|\left|\displaystyle \prod_{k=1}^{n}w_{m_j-k}-1\right|< \eps$$
Therefore
\begin{eqnarray}\label{18}
 \left(\prod_{k=1}^{n}w_{m_j-k}\right)>\frac{1-\eps}{\left|x_{m_j-n}\right|}>\frac{1-\eps}{\eps}>\frac{1}{\delta}
\end{eqnarray}
The proof follows by equation (\ref{17}) and (\ref{18}).\\

Conversely, let $\delta>0$, $q\in \N$, and $n$ be an arbitrary large positive number $n$ such that $T^n\m \subseteq \m$ and

$$\displaystyle\prod_{k=m_j}^{m_j+n-1}w_k<\delta \mbox{	and } \prod_{k=1-m_j}^{n-m_j}\frac{1}{w_{-k}}<\delta$$
for all $m_j\in \F$ with $|m_j|\le q$. Let $x=\sum_{\stackrel{|m_j|\le q}{m_j\in\F }}x_je_{m_j}$ and $y=\sum_{\stackrel{|m_j|\le q}{m_j\in\F }}y_je_{m_j}$ be two vectors in the span of $\set{e_{m_j}:|m_j|\le q}$. Then 

$$\norm{T^nx}\le \set{\prod_{k=m_j}^{m_j+n-1}w_k:|m_j|\le q}\norm{x}$$
and 
$$\norm{S^ny}\le \set{\prod_{k=1-m_j}^{n-m_j}\frac{1}{w_{-k}}:|m_j|\le q}\norm{y}$$
It is not difficult to apply \cite[Lemma 2 .2.]{4} for the subspace-transitivity case. Thus one can argue that $T$ is $\m$-transitive.
\end{proof}
\begin{lem}\label{35}
Let $T$ be an invertable bilateral weighted shift, $\set{n_k}$ be an increasing sequence of positive integers such that $n_k \to \iy$ as $k\to \iy$ and $T^{n_k}\m \subseteq \m$ for all $k\in \N$. If there exists an $m_i\in \F$ such that $T^{n_k}e_{m_i}\to 0$ as $k\to \iy$, then $T^{n_k}e_{m_r}\to 0$ for all $m_r\in \F$. 
\end{lem}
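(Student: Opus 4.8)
The plan is to use the invertibility of $T$ to express an arbitrary canonical basis vector $e_{m_r}$ as the image of the distinguished vector $e_{m_i}$ under a single fixed bounded operator, and then just carry the hypothesis $T^{n_k}e_{m_i}\to 0$ across that operator.

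First I would record that for every integer $p$ the vector $T^{p}e_{m_i}$ is a nonzero scalar multiple of $e_{m_i+p}$: for $p\ge 0$ this is the formula $T^{p}e_{m_i}=\brc{\prod_{j=m_i}^{m_i+p-1}w_j}e_{m_i+p}$ stated above, and for $p<0$ one uses $T^{-1}=S$ together with $S^{q}e_{m_i}=\brc{\prod_{j=m_i-1}^{m_i-q}\frac1{w_j}}e_{m_i-q}$; in both cases the scalar is nonzero since the weights are positive. Taking $p=m_r-m_i$ gives $e_{m_r}=\gamma\,T^{\,m_r-m_i}e_{m_i}$ for some $\gamma\neq 0$. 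Because $T^{n_k}$ and $T^{\,m_r-m_i}$ are both (possibly negative) powers of the same invertible operator $T$, they commute, so
\[
T^{n_k}e_{m_r}=\gamma\,T^{\,m_r-m_i}\brc{T^{n_k}e_{m_i}}.
\]
Here $T^{\,m_r-m_i}$ is a fixed bounded operator (independent of $k$) and $T^{n_k}e_{m_i}\to 0$ by hypothesis, so $T^{n_k}e_{m_r}\to 0$; since $m_r\in\F$ was arbitrary, this is the claim.

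An equivalent, more computational route avoids operator algebra: as the weights are positive, $\norm{T^{n_k}e_{m}}=\prod_{j=m}^{m+n_k-1}w_j$ for every $m$, and for $k$ large enough that $n_k>|m_r-m_i|$ the ratio $\norm{T^{n_k}e_{m_r}}\big/\norm{T^{n_k}e_{m_i}}$ telescopes to a quotient of two products of $|m_r-m_i|$ consecutive weights, one of them fixed and the other sliding with $k$; bounding the weights above by $\norm{T}$ and below by $\norm{T^{-1}}^{-1}$ shows this ratio stays under a constant independent of $k$, and the conclusion follows as before. I do not expect a real obstacle here. The only point that genuinely matters is invertibility — it is what makes $T^{\,m_r-m_i}$ bounded, equivalently what bounds the weights away from $0$ (for a merely bounded bilateral shift the analogous implication can fail). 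Note also that this particular statement does not actually use the hypotheses $n_k\to\iy$ or $T^{n_k}\m\subseteq\m$.
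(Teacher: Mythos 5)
Your argument is correct and is essentially the paper's own proof: the paper simply cites Feldman's Lemma 3.1, whose proof is exactly the comparability estimate you give (the ratio $\norm{T^{n_k}e_{m_r}}/\norm{T^{n_k}e_{m_i}}$ is bounded by a constant depending only on $m_r-m_i$, using $\norm{T^{-1}}^{-1}\le w_n\le \norm{T}$). Your side remark is also accurate: the hypotheses $n_k\to\iy$ and $T^{n_k}\m\subseteq\m$ are not needed for this particular lemma, even though the paper's one-line proof gestures at the invariance condition.
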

\begin{proof}
Since $T^{n_k}\m \subseteq \m$, the proof is analogous to the proof of \cite[Lemma 3.1.]{H C Bi.}.
\end{proof}
\begin{thm} \label{forw}
Let $T$ be an invertable bilateral forward weighted shift in the Hilbert space $\LZ$ with a positive weight sequence $\set{w_n}_{n\in\Z}$ and  $\m$ be a subspace of $\LZ$. Let $\set{n_k}$ be an increasing sequence of positive integers such that $n_k \to \iy$ as $k\to \iy$ and $T^{n_k}\m \subseteq \m$ for all $k\in \N$. Then $T$ is $\m$-transitive if and only if there exists $m_i\in \F$ such that
  	\begin{eqnarray}\label{65}  	
		\displaystyle\lim_{k\to \iy}\prod_{j=m_i}^{m_i+n_k-1}w_j=0 \mbox{	and } \displaystyle\lim_{k\to \iy}\prod_{j=1-m_i}^{n_k-m_i}\frac{1}{w_{-j}}=0
		\end{eqnarray}
\end{thm}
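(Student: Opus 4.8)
The plan is to prove this as a limiting/quantitative refinement of Theorem~\ref{19}, using Lemma~\ref{35} to promote a single ``good'' index $m_i$ to all indices in $\F$. For the forward implication, suppose $T$ is $\m$-transitive. I would run the argument of Theorem~\ref{19} but now extract genuine limits rather than a single inequality: since $\{n_k\}$ is an increasing sequence with $T^{n_k}\m\subseteq\m$, for each fixed $\eps>0$ and each fixed $q$ one can (by Proposition~\ref{chMD}) find $n_k>2q$ large and a vector $x\in\m$ close to $\sum_{|m_j|\le q}e_{m_j}$ with $T^{n_k}x$ also close to that sum, yielding as in (\ref{17}) and (\ref{18}) that $\prod_{j=m_j}^{m_j+n_k-1}w_j$ is small and $\prod_{j=1-m_j}^{n_k-m_j}\frac{1}{w_{-j}}$ is small for all $|m_j|\le q$. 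Letting $\eps\to0$ along a suitable subsequence of $\{n_k\}$ gives at least one $m_i\in\F$ along which both products tend to $0$; this is what (\ref{65}) asserts.

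For the converse, assume (\ref{65}) holds for some $m_i\in\F$. First invoke Lemma~\ref{35}: from $T^{n_k}e_{m_i}=\big(\prod_{j=m_i}^{m_i+n_k-1}w_j\big)e_{m_i+n_k}\to0$ we get $T^{n_k}e_{m_r}\to0$ for every $m_r\in\F$. Because $T$ is invertible, $T^{-n_k}e_{m_r}=S^{n_k}e_{m_r}=\big(\prod_{j=1-m_r}^{n_k-m_r}\frac1{w_{-j}}\big)e_{m_r-n_k}$ up to reindexing, and the second half of (\ref{65}) together with the invertible analogue of Lemma~\ref{35} applied to $T^{-1}$ (equivalently, to the ``backward'' direction) gives $S^{n_k}e_{m_r}\to0$ for all $m_r\in\F$. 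Now I would verify condition (2) of Proposition~\ref{chMD}: given $x=\sum_{m_j\in\F,\ |m_j|\le q}x_je_{m_j}$ and $y=\sum_{m_j\in\F,\ |m_j|\le q}y_je_{m_j}$ in the span of basis vectors of $\m$, set $x_k=x+S^{n_k}y$ and note $x_k\in\m$ for large $k$ (since $S^{n_k}y$ is a combination of $e_{m_r-n_k}$, which lies in $\m$ once $T^{n_k}\m\subseteq\m$ forces the relevant shifted indices back into $\F$ — this needs a short check). Then $x_k\to x$ because $S^{n_k}y\to0$, and $T^{n_k}x_k=T^{n_k}x+T^{n_k}S^{n_k}y=T^{n_k}x+y\to y$ because $T^{n_k}x\to0$. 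Since these vectors with finite support are dense in $\m$, Proposition~\ref{chMD}(2) is satisfied and $T$ is $\m$-transitive.

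The main obstacle I anticipate is the bookkeeping around membership in $\m$: Proposition~\ref{chMD}(2) requires $x_k\in\m$ and $T^{n_k}\m\subseteq\m$, so I must confirm that $S^{n_k}y$ lands in $\m$ and that applying $T^{n_k}$ stays inside $\m$. The hypothesis $T^{n_k}\m\subseteq\m$ controls one direction, but $S^{n_k}y\in\m$ is not immediate and will rely on the structure of $\F$ as a Schauder basis for $\m$ together with the identity $T^{n_k}S^{n_k}=I$; one likely needs that the indices $\{m_r-n_k\}$ (for $m_r\in\F$, $|m_r|\le q$) again belong to $\F$, which should follow because $T^{n_k}$ maps $e_{m_r-n_k}$ (up to scalar) to $e_{m_r}\in\m$ and $T^{n_k}\m\subseteq\m$ forces consistency. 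A secondary, more routine obstacle is packaging the invertible-backward-shift version of Lemma~\ref{35}; since $T^{-1}=S$ is itself an invertible bilateral weighted shift and $S^{n_k}\m\subseteq\m$ is not assumed, one should instead argue directly that the second product in (\ref{65}) tending to $0$ for one $m_i$ propagates to all $m_r$ by the same ratio-of-weights telescoping used in Lemma~\ref{35}, or deduce it from $T^{n_k}e_{m_r}\to0$ plus a compactness/normalization argument. Once these membership and propagation points are settled, the rest is the same two-density-argument template already used for Theorem~\ref{19} and in \cite{H C Bi.}.
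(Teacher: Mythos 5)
Your proposal follows essentially the same route as the paper: the ``only if'' direction is deduced from Theorem~\ref{19}, and the ``if'' direction reduces to finitely supported vectors, applies Lemma~\ref{35} to propagate $T^{n_k}e_{m_i}\to 0$ and $S^{n_k}e_{m_i}\to 0$ to all of $\F$, and uses $x_k$ built from $S^{n_k}$ to verify the subspace-hypercyclicity/transitivity criterion. The membership issue you flag (whether $S^{n_k}y\in\m$) is a real subtlety that the paper's own proof passes over silently, so your explicit attention to it is if anything more careful than the original.
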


\begin{proof}
To Prove the ``if'' part , we will verify the $\m$-hypercyclic criterion with $D=D_1=D_2$ be a dense subset of $\m$ consisting of sequences that only have a finite number of non-zero entries. Let $x\in D$, then it is enough to show that $x=e_{m_r}$; $m_r\in \F$. In addition, by Lemma \ref{35} it suffices to show that $T^{n_k}e_{m_i} \to 0$ and $S^{n_k}e_{m_i} \to 0$. However, that is clear because $\displaystyle \norm{T^{n_k}e_{m_i}}=\prod_{j=m_i}^{m_i+n_k-1}w_j\to 0$ and $\displaystyle \norm{S^{n_k}e_{m_i}}=\prod_{j=m_i-1}^{m_i-n_k}\frac{1}{w_{j}}\to 0$. Moreover it is clear that $T^{n_k}S^{n_k}x=x$. By taking $x_k=S^{n_k}x$, it is clear that the conditions of $\m$-hypercyclic criterion are satisfied.\\
The proof of the ``only if''  part follows from Theorem \ref{19}
\end{proof}
It is well known that a weighted shift operator $T$ is invertable if and only if there exists $b>0$ such that $|w_n|\ge b$ for all $n\in \Z$. The next theorem shows that the above theorem still holds by assuming that there exists $b>0$ such that $w_n\ge b$ for all $n<0$. 
\begin{thm} \label{84}
Let $T$ be a bilateral forward weighted shift in the Hilbert space $\LZ$ with a positive weight sequence
$\set{w_n}_{n\in\Z}$, $w_n \ge b>0$ for all $n<0$ and $\m$ be a subspace of $\LZ$. Let $\set{n_k}$ be an increasing sequence of positive integers such that $n_k \to \iy$ as $k\to \iy$ and $T^{n_k}\m \subseteq \m$ for all $k\in \N$. Suppose that there exist $m_i\in \F$ such that $m_i\ge 0$, then $T$ is $\m$-transitive if and only if 
  	\begin{eqnarray} 	
		\displaystyle\lim_{k\to \iy}\prod_{j=m_i}^{m_i+n_k-1}w_j=0 \mbox{	and } \displaystyle\lim_{k\to \iy}\prod_{j=1-m_i}^{n_k-m_i}\frac{1}{w_{-j}}=0
		\end{eqnarray}
\end{thm}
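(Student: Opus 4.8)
The plan is to reduce Theorem~\ref{84} to the invertible case already handled in Theorem~\ref{forw}, by exploiting the hypothesis $w_n\ge b>0$ for all $n<0$ only where it is actually needed and otherwise reusing verbatim the machinery of Lemma~\ref{35}, Theorem~\ref{19} and Theorem~\ref{forw}.

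\medskip

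\textbf{The ``only if'' direction.} Here I would argue exactly as in the converse-to-forward reasoning: if $T$ is $\m$-transitive, then for every $\delta>0$ and every $q\in\N$ one can produce an arbitrarily large $n$ with $T^n\m\subseteq\m$ and $\prod_{k=m_j}^{m_j+n-1}w_k<\delta$, $\prod_{k=1-m_j}^{n-m_j}\frac1{w_{-k}}<\delta$ for all $m_j\in\F$ with $|m_j|\le q$; this is precisely the content of Theorem~\ref{19} (its proof does not use invertibility). Applying this with $q=|m_i|$ and $\delta=\delta_k\to 0$ along a subsequence of the given $\{n_k\}$, and using that $m_i$ is one of the allowed indices, yields $\prod_{j=m_i}^{m_i+n_k-1}w_j\to 0$ and $\prod_{j=1-m_i}^{n_k-m_i}\frac1{w_{-j}}\to 0$. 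The only subtlety is extracting the limit along the prescribed sequence $\{n_k\}$ rather than some abstract sequence; this I would handle by noting that once $\m$-transitivity is known, the displayed products can be forced small along \emph{any} cofinal set of $n$'s with $T^n\m\subseteq\m$, in particular along $\{n_k\}$.

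\medskip

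\textbf{The ``if'' direction.} This is where the hypothesis $w_n\ge b$ for $n<0$ enters. As in the proof of Theorem~\ref{forw}, I would verify the $\m$-hypercyclic criterion along $\{n_k\}$ with $D=D_1=D_2$ the dense subset of $\m$ of finitely supported sequences, and by Lemma~\ref{35} it suffices to treat $x=e_{m_i}$ and to show $T^{n_k}e_{m_i}\to 0$ together with the existence of $x_k\in\m$ with $x_k\to e_{m_i}$ and $T^{n_k}x_k\to e_{m_i}$. The first assertion, $\norm{T^{n_k}e_{m_i}}=\prod_{j=m_i}^{m_i+n_k-1}w_j\to 0$, is immediate from the hypothesis. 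Since $T$ need not be invertible we cannot set $x_k=S^{n_k}x$ with $S=T^{-1}$; instead I would use the formal right inverse $S$ defined before Theorem~\ref{19} by $S(e_r)=\frac1{w_{r-1}}e_{r-1}$, which still satisfies $T^{n_k}S^{n_k}e_{m_i}=e_{m_i}$, and put $x_k=S^{n_k}e_{m_i}+e_{m_i}-T^{n_k}e_{m_i}$ (or simply $x_k=S^{n_k}e_{m_i}$). One then needs $\norm{S^{n_k}e_{m_i}}=\prod_{j=1-m_i}^{n_k-m_i}\frac1{w_{-j}}\to 0$, which is exactly the second hypothesis, and one must check that $S^{n_k}e_{m_i}\in\m$: since $m_i\ge 0$, for large $k$ the index $m_i-n_k$ is negative, and the condition $w_n\ge b$ for $n<0$ guarantees that $S^{n_k}$ acts boundedly on the relevant coordinates so that the telescoping $T^{n_k}x_k=e_{m_i}$ makes sense and $x_k\in\m$ follows from $T^{n_k}\m\subseteq\m$ applied in reverse together with $S$ being a genuine bounded operator on the ``negative part.''

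\medskip

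\textbf{Main obstacle.} The delicate point is the membership $x_k=S^{n_k}e_{m_i}\in\m$: unlike the invertible case, $S$ is only a right inverse, so $T^{n_k}\m\subseteq\m$ does not formally give $S^{n_k}\m\subseteq\m$. The hypotheses $m_i\ge0$ and $w_n\ge b$ for $n<0$ are precisely what rescue this — they ensure $S$ restricted to the span of $\{e_r:r\le 0\}$ is bounded, so $S^{n_k}e_{m_i}$ is a well-defined vector of $\LZ$ with $T^{n_k}(S^{n_k}e_{m_i})=e_{m_i}\in\m$, and one argues (as in \cite{H C Bi.}) that the approximating vectors can be chosen inside $\m$ by a perturbation/closedness argument using that $e_{m_i}$ lies in the Schauder basis of $\m$ and that $T^{n_k}$ maps $\m$ into $\m$. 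Once this is in place, Lemma~\ref{35} propagates the conclusion from $e_{m_i}$ to all basis vectors $e_{m_r}$ of $\m$, and the $\m$-hypercyclic criterion (equivalently Proposition~\ref{chMD}(3)) finishes the proof via Theorem~\ref{MTD}. I expect the write-up to mirror the proof of Theorem~\ref{forw} almost line for line, with the single extra paragraph justifying boundedness of $S$ on the negative coordinates.
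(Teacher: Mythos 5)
Your ``only if'' half matches the paper's (both reduce it to Theorem \ref{19}), but your ``if'' half takes a different route from the paper's and it contains two genuine gaps. First, you invoke Lemma \ref{35} to reduce everything to the single basis vector $e_{m_i}$, but that lemma is stated and proved only for \emph{invertible} shifts, and invertibility is exactly what is missing here. Propagating $T^{n_k}e_{m_i}\to 0$ to $T^{n_k}e_{m_r}\to 0$ for $m_r<m_i$ requires comparing $\prod_{j=m_r}^{m_r+n_k-1}w_j$ with $\prod_{j=m_i}^{m_i+n_k-1}w_j$, and their quotient is $\brc{\prod_{j=m_r}^{m_i-1}w_j}\big/\brc{\prod_{j=m_r+n_k}^{m_i+n_k-1}w_j}$, whose denominator is a product of finitely many weights at \emph{large positive} indices; the hypothesis only bounds the \emph{negative} weights below, so this quotient can blow up and the reduction can genuinely fail along the prescribed sequence $\set{n_k}$ (one can arrange $\prod_{j=0}^{n_k-1}w_j\to 0$ while $\prod_{j=-1}^{n_k-2}w_j\not\to 0$ by making $w_{n_k-1}$ extremely small). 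This is precisely why the paper does \emph{not} run the criterion along $\set{n_k}$: it verifies Theorem \ref{19} with the \emph{shifted} exponent $n=n_k+m_i+q+1$, so that the extra forward factors sit to the right of $m_i+n_k$ and are bounded \emph{above} by powers of $\norm{T}$, while the extra backward factors sit at negative indices where the lower bound $b$ applies; no lower bound on positive weights is ever needed, and the hypotheses $m_i\ge 0$ and $w_n\ge b$ for $n<0$ are used exactly there. In your write-up the hypothesis $m_i\ge 0$ plays no real role, which is a sign that the index bookkeeping has not been done.

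Second, your resolution of the ``main obstacle'' $x_k=S^{n_k}e_{m_i}\in\m$ does not work. The vector $S^{n_k}e_{m_i}$ is a scalar multiple of $e_{m_i-n_k}$, so the issue is whether $m_i-n_k$ indexes a basis vector of $\m$; the hypothesis $T^{n_k}\m\subseteq\m$ forces $\F+n_k\subseteq\F$ but gives no information about $\F-n_k$ (take $\F=\no$: then $e_{m_i-n_k}\notin\m$ for large $k$). Boundedness of $S$ on the negative coordinates, which is what $w_n\ge b$ actually provides, is irrelevant to this membership question, and the promised ``perturbation/closedness argument'' is not an argument. The paper sidesteps this entirely for Theorem \ref{84} by routing the sufficiency through Theorem \ref{19}, whose converse is established by a Salas-type open-sets argument rather than by exhibiting $S^{n_k}$-preimages inside $\m$. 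To repair your proof you should abandon the criterion-along-$\set{n_k}$ strategy of Theorem \ref{forw} and instead carry out the product decompositions that verify the quantitative conditions of Theorem \ref{19} for all $m_p\in\F$ with $|m_p|\le q$ at the shifted power $n$.
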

\begin{proof}
For ``if'' part, we will verify Theorem \ref{19}. Let $\eps >0$, $q\in \N$ and let $\de >0$; by hypothesis there exists an arbitrary large $n_k\in \set{n_k}$ such that $T^{n_k}\m \subseteq \m$ and $m_i\in \F$ such that $m_i\ge 0$ and
$$\displaystyle \prod_{j=m_i}^{m_i+n_k-1}w_j < \de \mbox{	and } \displaystyle \prod_{j=1-m_i}^{n_k-m_i}\frac{1}{w_{-j}}< \de$$ 
Let $n=n_k+m_i+q+1$ (which ensure that $m_p+n-1 \ge n_k+m_i$ for all $|m_p|\le q$). Now, for all $m_p\in \F$ with $|m_p|\le q$ we have 
\begin{eqnarray*}
\displaystyle \prod_{j=m_p}^{n+m_p-1}w_j &=&\left(\prod_{j=m_i}^{m_p-1}\frac{1}{w_j}\right) \left(\prod_{j=m_i}^{m_p-1}w_j \right) \left(\prod_{j=m_p}^{m_i+n_k-1}w_j\right) \left(\prod_{j=m_i+n_k}^{n+m_p-1}w_j \right)\\
 &=&\left(\prod_{j=m_i}^{m_p-1}\frac{1}{w_j}\right)  \left(\prod_{j=m_i}^{m_i+n_k-1}w_j\right) \left(\prod_{j=m_i+n_k}^{n+m_p-1}w_j \right)\\
 &\le & C \left(\prod_{j=m_i}^{m_i+n_k-1}w_j\right) \norm{T^2q}\\
 &\le & C \de  \norm{T^2q}.
\end{eqnarray*}
where $C$ is a constant depending only on $q$. So, if $\de < \frac{\eps}{C \norm{T^2q}}$, then  $\displaystyle \prod_{j=m_p}^{n+m_p-1}w_j \le \eps$ for all $m_p\in \F$ with $|m_p|\le q$.\\

Also, with the same choice of $n$, the condition $m_i \ge 0$ (which ensure that $n-m_p > n_k-m_i+1$) and $|m_p|\le q$ we have
\begin{eqnarray*}
\displaystyle \prod_{j=1-m_p}^{n-m_p}\frac{1}{w_{-j}}&=& \left(\prod_{j=1-m_i}^{-m_p}w_{-j}\right) \left(\prod_{j=1-m_i}^{-m_p}\frac{1}{w_{-j}}\right) \left( \prod_{j=1-m_p}^{n_k-m_i}\frac{1}{w_{-j}}\right) \left(\prod_{j=n_k-m_i+1}^{n-m_p}\frac{1}{w_{-j}}\right)\\
&=& \left(\prod_{j=1-m_i}^{-m_p}w_{-j}\right) \left( \prod_{j=1-m_i}^{n_k-m_i}\frac{1}{w_{-j}}\right) \left(\prod_{j=n_k-m_i+1}^{n-m_p}\frac{1}{w_{-j}}\right)\\
&\le & L \de \left(\frac{1}{b}\right)^{2q}\\
\end{eqnarray*}
where $b$ is a lower bound for the negative weights and $L$ is a constant depending only on $q$. Hence, if $\de < \frac{b^{2q}\eps}{L}$, then $\displaystyle \prod_{j=1-m_p}^{n-m_p}\frac{1}{w_{-j}}\le \eps$ for all $m_p\in \F$ with $|m_p|\le q$.\\

The converse side follows immediately by Theorem \ref{19}. 
\end{proof}
The following Theorem can be proved by the same arguments for proving Theorem \ref{forw}; therefore, we will state it without proof. \\ 
Let $\m_1$ and $\m_2$ be closed subspaces of the Hilbert space $\LZ$. Suppose that $$\F_1=\left\{m_r: r\in \N \mbox { and } e_{m_r}\in \mbox { Schauder basis for } \m_1 \right\} $$ and  $$\F_2=\left\{h_r: r\in \N \mbox { and } e_{h_r}\in \mbox { Schauder basis for } \m_2\right\} $$
\begin{thm}\label{28}
Let $T_1$ and $T_2$ be invertable bilateral forward weighted shifts in the Hilbert space $\LZ$ with a positive weight sequence
$\set{w_n}_{n\in\Z}$ and $\set{a_n}_{n\in\Z}$, respectively. Let $\m_1$ and $\m_2$ be subspaces of $\LZ$. Let $\set{n_k}$ be an increasing sequence of positive integers such that $n_k \to \iy$ as $k\to \iy$ and $(T_1 \oplus T_2)^{n_k}(\m_1 \oplus \m_2)\subseteq \m_1 \oplus \m_2$ for all $k\in \N$. Then $T_1 \oplus T_2$ is $\m_1 \oplus \m_2$-transitive if and only if there exist $m_i \in \F_1$ and $h_p\in \F_2$ such that
\begin{eqnarray}\label{25}
  	\displaystyle\lim_{k\to \iy} \max \left\{ \prod_{j=m_i}^{m_i+n_k-1}w_j , \prod_{j=h_p}^{h_p+n_k-1}a_j  \right\}=0 
\end{eqnarray}		
		and
\begin{eqnarray}\label{26}		
		\displaystyle\lim_{k\to \iy} \max \left\{\prod_{j=1-m_i}^{n_k-m_i}\frac{1}{w_{-j}} ,  \prod_{j=1-h_p}^{n_k-h_p}\frac{1}{a_{-j}}\right\}=0
		\end{eqnarray}
\end{thm}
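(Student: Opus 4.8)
The plan is to mirror the proof of Theorem~\ref{forw}, working throughout on the product space $\LZ \oplus \LZ$ with the product weighted shift $T_1 \oplus T_2$, whose action on the standard basis vectors $e_{m} \oplus 0$ and $0 \oplus e_{h}$ is governed respectively by the weight sequences $\set{w_n}$ and $\set{a_n}$. First I would record that $(T_1\oplus T_2)$ is invertible (since $T_1,T_2$ are, being bilateral with weights bounded below), with inverse $S_1 \oplus S_2$, and that the powers satisfy $(T_1\oplus T_2)^{n_k} = T_1^{n_k} \oplus T_2^{n_k}$, so that $\norm{(T_1\oplus T_2)^{n_k}(e_{m_i}\oplus 0)} = \prod_{j=m_i}^{m_i+n_k-1} w_j$ and similarly for the second coordinate and for $S^{n_k}$. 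I would also need the obvious analogue of Lemma~\ref{35} for the direct sum: if $(T_1\oplus T_2)^{n_k}(e_{m_i}\oplus 0) \to 0$ for one basis vector coming from $\F_1$, then $(T_1\oplus T_2)^{n_k}(e_{m_r}\oplus 0)\to 0$ for every $m_r\in \F_1$, and correspondingly within the $\F_2$ coordinate; this follows coordinatewise from Lemma~\ref{35} applied to $T_1$ and to $T_2$ separately (the hypothesis $T_j^{n_k}\m_j \subseteq \m_j$ is exactly what is available once one restricts the invariance of $\m_1\oplus\m_2$ to pure tensors).

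Next, for the ``if'' direction I would verify the $\m_1\oplus\m_2$-hypercyclic criterion along $\set{n_k}$ with the dense set $D = D_1 = D_2$ of finitely supported sequences in $\m_1\oplus\m_2$. It suffices to treat $x = e_{m_r}\oplus 0$ and $x = 0 \oplus e_{h_r}$ separately; by the direct-sum version of Lemma~\ref{35} it is enough to check that $(T_1\oplus T_2)^{n_k}(e_{m_i}\oplus 0)\to 0$ and $(S_1\oplus S_2)^{n_k}(e_{m_i}\oplus 0)\to 0$, and the analogous statements for $0\oplus e_{h_p}$. But $\norm{(T_1\oplus T_2)^{n_k}(e_{m_i}\oplus 0)} = \prod_{j=m_i}^{m_i+n_k-1}w_j$, which tends to $0$ because the $\max$ in \eqref{25} does; likewise $\norm{(S_1\oplus S_2)^{n_k}(e_{m_i}\oplus 0)} = \prod_{j=m_i-1}^{m_i-n_k}\frac{1}{w_j} = \prod_{j=1-m_i}^{n_k-m_i}\frac{1}{w_{-j}}\to 0$ by \eqref{26}, and symmetrically for $h_p$ using the second entries of the two maxima. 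Since $(T_1\oplus T_2)^{n_k}(S_1\oplus S_2)^{n_k}x = x$, setting $x_k = (S_1\oplus S_2)^{n_k}x$ gives approximating sequences, and Proposition~\ref{chMD} (via the $\m$-hypercyclic criterion used in Theorem~\ref{forw}) yields $\m_1\oplus\m_2$-transitivity.

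For the ``only if'' direction I would invoke Theorem~\ref{19} applied to the operator $T_1\oplus T_2$ on $\LZ\oplus\LZ \cong \LZ$: $\m_1\oplus\m_2$-transitivity forces, for every $\delta>0$ and every $q$, the existence of arbitrarily large $n$ with $(T_1\oplus T_2)^n(\m_1\oplus\m_2)\subseteq \m_1\oplus\m_2$ and with the forward-product and backward-product estimates holding for all basis indices of modulus $\le q$ drawn from $\F_1$ (in the first coordinate) and from $\F_2$ (in the second). Choosing such $n$ from within the given sequence $\set{n_k}$ and letting $\delta\to 0$ produces indices $m_i\in\F_1$ and $h_p\in\F_2$ along which $\prod_{j=m_i}^{m_i+n_k-1}w_j\to 0$, $\prod_{j=1-m_i}^{n_k-m_i}\frac{1}{w_{-j}}\to 0$, $\prod_{j=h_p}^{h_p+n_k-1}a_j\to 0$ and $\prod_{j=1-h_p}^{n_k-h_p}\frac{1}{a_{-j}}\to 0$; passing to a common subsequence and taking maxima recovers \eqref{25} and \eqref{26}.

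The main obstacle I anticipate is purely bookkeeping rather than conceptual: making sure that the Schauder-basis index set $\F$ for the closed subspace $\m_1\oplus\m_2$ decomposes cleanly as the union of the indices coming from $\F_1$ in the first slot and those from $\F_2$ in the second, so that the quantifier ``there exists $m_i\in\F$'' in Theorem~\ref{forw}/Theorem~\ref{19} translates into the paired statement ``there exist $m_i\in\F_1$ and $h_p\in\F_2$'' — one genuinely needs \emph{both}, since a direct sum of hypercyclic-type data requires a good vector in each factor, which is exactly why the statement uses two indices and two maxima rather than one. Checking the invariance hypothesis $T_j^{n_k}\m_j\subseteq\m_j$ is deducible from $(T_1\oplus T_2)^{n_k}(\m_1\oplus\m_2)\subseteq\m_1\oplus\m_2$ only after this decomposition is in place, so that is where I would be most careful. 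Everything else is a routine adaptation of the single-operator arguments already given.
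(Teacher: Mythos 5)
Your proposal matches the paper's intent exactly: the paper states Theorem~\ref{28} without proof, remarking only that it follows by the same arguments as Theorem~\ref{forw}, and your coordinatewise version of Lemma~\ref{35}, the hypercyclic-criterion verification on $e_{m_i}\oplus 0$ and $0\oplus e_{h_p}$, and the Theorem~\ref{19}-style necessity argument are precisely that adaptation. The only point to tighten is that Theorem~\ref{19} cannot be invoked verbatim for $T_1\oplus T_2$ (a direct sum of shifts is not itself a weighted shift on $\LZ$ under the interleaving identification), so one should either rerun its proof on the product or project the transitivity to each factor and apply Theorem~\ref{forw} there; since each of the four products then tends to $0$ along all of $\set{n_k}$, no passage to a subsequence is needed to obtain \eqref{25} and \eqref{26}.
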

It can be easily shown that the above theorem does not hold just for two operators but for a finite number of invertable bilateral forward weighted shifts.\\
By the same way we can characterize the $\m$-hypercyclic backward weighted shifts since they are unitarily equivalent to forward shifts.
\begin{prop}\label{bac}
Let $T$ be an invertable bilateral backward weighted shift in the Hilbert space $\LZ$ with a positive weight sequence $\set{w_n}_{n\in\Z}$ and  $\m$ be a subspace of $\LZ$. Let $\set{n_k}$ be an increasing sequence of positive integers such that $n_k \to \iy$ as $k\to \iy$ and $T^{n_k}\m \subseteq \m$ for all $k\in \N$. Then $T$ is $\m$-transitive if and only if there exists $m_i\in \F$ such that
  			$$\displaystyle\lim_{k\to \iy}\prod_{j=m_i}^{m_i+n_k-1}w_{-j}=0 \mbox{	and } \displaystyle\lim_{k\to \iy}\prod_{j=1-m_i}^{n_k-m_i}\frac{1}{w_{j}}=0$$
		
\end{prop}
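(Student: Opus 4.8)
The plan is to deduce Proposition \ref{bac} from Theorem \ref{forw} by conjugating $T$ with the \emph{flip} unitary, as indicated just before the statement. First I would introduce $U\colon\LZ\to\LZ$, $Ue_n=e_{-n}$; this is a self-adjoint unitary with $U^{2}=I$. The opening step is the elementary computation $UTU\,e_n=w_{-n}e_{n+1}$, which shows that if $T$ is the bilateral backward weighted shift with positive weight sequence $\set{w_n}_{n\in\Z}$, then $\widehat T:=UTU$ is the bilateral \emph{forward} weighted shift whose weight sequence is the reflected one $\set{w_{-n}}_{n\in\Z}$ --- still positive, and invertible because $T$ is. Hence $\widehat T$ meets the standing hypotheses of Theorem \ref{forw}.

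The second step is to transport the remaining data along $U$. Put $\widehat\m:=U\m$, a closed subspace of $\LZ$; since $U$ merely relabels the standard orthonormal basis, $e_{m_r}\mapsto e_{-m_r}$, a Schauder basis for $\widehat\m$ is $\set{e_{-m_r}:m_r\in\F}$, so the index set associated with $\widehat\m$ is $\widehat\F=\set{-m_r:m_r\in\F}$. Moreover $\widehat T^{\,n_k}\widehat\m=UT^{n_k}U\cdot U\m=UT^{n_k}\m\subseteq U\m=\widehat\m$, so the hypothesis $T^{n_k}\m\subseteq\m$ is inherited. Finally, being a surjective isometry, $U$ carries open sets to open sets, relatively open subsets of $\widehat\m$ to relatively open subsets of $\m$, and satisfies $\widehat T^{\,-n}(UU_1)\cap UU_2=U\!\brc{T^{-n}U_1\cap U_2}$ for all $U_1,U_2$ and all $n$; hence $T$ is $\m$-transitive if and only if $\widehat T$ is $\widehat\m$-transitive. (Equivalently, one may use characterization (2) of Proposition \ref{chMD} and push the approximating sequences $\set{x_k}$ through $U$.)

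The third step is to apply Theorem \ref{forw} to $(\widehat T,\widehat\m,\set{n_k})$: the shift $\widehat T$ is $\widehat\m$-transitive if and only if there is an index in $\widehat\F$ for which both product-limits in (\ref{65}), written with the weights $\set{w_{-n}}$ of $\widehat T$, vanish. The last step is to translate this back to $T$: substituting the weights of $\widehat T$, writing the chosen index of $\widehat\F$ as $-m_i$ with $m_i\in\F$, and reversing the product index $j\mapsto -j$ converts those two limits into exactly the two limits displayed in Proposition \ref{bac}; since every step of the reduction is an equivalence, this simultaneously yields the converse.

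The one point needing care is precisely the index bookkeeping in this last step --- how $\F$ is transformed by the flip and, above all, the orientation and the endpoints of the products after the substitution $j\mapsto -j$ --- so that the conditions come out in exactly the stated form. One should also verify at the outset that positivity and invertibility really are inherited by $\widehat T$, since Theorem \ref{forw} is available only for invertible forward shifts with positive weights. Apart from that, the argument is a routine transport along $U$.
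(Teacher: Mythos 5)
Your proposal is correct and is essentially the paper's own argument: the paper justifies Proposition \ref{bac} only by the one-line remark that backward shifts are unitarily equivalent to forward shifts, which is precisely the conjugation by the flip unitary $Ue_n=e_{-n}$ that you carry out in detail and reduce to Theorem \ref{forw}. The index bookkeeping you flag at the end (the reflected index set $\set{-m_r:m_r\in\F}$ for $U\m$ versus the $m_i\in\F$ appearing literally in the statement) is indeed the only delicate point, and the paper itself does not address it.
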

\begin{prop} \label{85}
Let $T$ be a bilateral backward weighted shift in the Hilbert space $\LZ$ with a positive weight sequence
$\set{w_n}_{n\in\Z}$, $w_n \ge b>0$ for all $n<0$ and $\m$ be a subspace of $\LZ$. Let $\set{n_k}$ be an increasing sequence of positive integers such that $n_k \to \iy$ as $k\to \iy$ and $T^{n_k}\m \subseteq \m$ for all $k\in \N$. Suppose that there exist $m_i\in \F$ such that $m_i\ge 0$, then $T$ is $\m$-transitive if and only if 
	$$\displaystyle\lim_{k\to \iy}\prod_{j=m_i}^{m_i+n_k-1}w_{-j}=0 \mbox{	and } \displaystyle\lim_{k\to \iy}\prod_{j=1-m_i}^{n_k-m_i}\frac{1}{w_{j}}=0$$
\end{prop}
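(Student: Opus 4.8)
The plan is to reduce Proposition \ref{85} to Theorem \ref{84} by means of the flip unitary, exactly as the remark preceding the statement indicates. Let $U\colon\LZ\to\LZ$ be the operator with $Ue_n=e_{-n}$ for all $n\in\Z$; it is a self-adjoint involution, so $U=U^*=U^{-1}$ is a surjective isometry. A computation on basis vectors shows that conjugation by $U$ turns the bilateral backward weighted shift $T$ with positive weight sequence $\set{w_n}_{n\in\Z}$ into the bilateral forward weighted shift $A:=UTU$ whose weight sequence is $\set{w_{-n}}_{n\in\Z}$ (up to the fixed indexing convention for backward shifts). Consequently $T$ is invertible iff $A$ is, and the one-sided lower bound $w_n\ge b>0$ for $n<0$ on $T$ becomes a one-sided lower bound for the weights of $A$, now over the complementary range of indices.

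Next I would transport the subspace data through $U$. Since $U$ is a surjective isometry permuting the basis $\set{e_n}_{n\in\Z}$, the image $U\m$ is again a closed subspace and $\set{e_{-m_r}\colon m_r\in\F}$ is a Schauder basis for it; thus the index family $\F$ of $\m$ is replaced by $\set{-m_r\colon m_r\in\F}$ for $U\m$, and ``there is $m_i\in\F$ with $m_i\ge0$'' becomes ``there is a basis index of $U\m$ that is $\le0$''. Moreover $A^{n_k}(U\m)=U\,T^{n_k}\m$, so $T^{n_k}\m\subseteq\m$ for all $k$ is equivalent to $A^{n_k}(U\m)\subseteq U\m$ for all $k$, and since conjugation by a surjective isometry preserves transitivity, $T$ is $\m$-transitive iff $A$ is $(U\m)$-transitive. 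Under this dictionary the statement to be proved is precisely the left--right mirror image of Theorem \ref{84}: the bound $b$ now on the positive weights, the witnessing basis index $\le0$. This mirror image holds by the same proof, because the only ingredient of the argument for Theorem \ref{84} that is not already symmetric under $U$ is Theorem \ref{19}, and Theorem \ref{19} carries no left/right-asymmetric hypothesis, so its mirror image is immediate from $U$; the mirror of the truncation step (the choice $n=n_k+m_i+q+1$, with the sign of $m_i$ reversed, arranged so the finite index windows overlap, the extra finite products then bounded by a constant times the $w$-product at $m_i$, times $\norm{T^{2q}}$ and times $b^{-2q}$) then goes through verbatim.

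Finally I would pull the resulting equivalence back through $U$: writing the witnessing index of $U\m$ as $-m_i$ with $m_i\in\F$, substituting $w_{-n}$ for the weights of $A$, and performing the change of summation variable $j\mapsto -j$ inside each of the two product conditions converts them into
\[
\lim_{k\to\iy}\prod_{j=m_i}^{m_i+n_k-1}w_{-j}=0\qquad\mbox{ and }\qquad\lim_{k\to\iy}\prod_{j=1-m_i}^{n_k-m_i}\frac{1}{w_{j}}=0,
\]
which is exactly the equivalence asserted in Proposition \ref{85}. I expect no conceptual obstacle here: the entire difficulty is bookkeeping, i.e. keeping the index reversals consistent so that the sign constraint on $m_i$, the range on which $b$ bounds the weights, and the precise form of the two products all come out correctly. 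Equivalently, and perhaps more cleanly for the write-up, one may skip the unitary altogether, observe that Proposition \ref{85} stands to Proposition \ref{bac} exactly as Theorem \ref{84} stands to Theorem \ref{forw}, and simply repeat the truncation argument in the proof of Theorem \ref{84} for backward shifts, with the backward analogue of Theorem \ref{19} in place of Theorem \ref{19}; that route is entirely mechanical.
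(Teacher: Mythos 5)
Your proposal follows exactly the route the paper intends: the paper states Proposition \ref{85} without any proof, appealing only to the preceding remark that backward shifts are unitarily equivalent to forward shifts, and your flip-unitary reduction to Theorem \ref{84} supplies precisely the bookkeeping that remark leaves implicit. If anything you are more careful than the source, since you correctly note that conjugation by $Ue_n=e_{-n}$ lands on the \emph{mirror image} of Theorem \ref{84} (lower bound on the positive weights, witnessing index $\le 0$) and that the truncation argument there is symmetric enough to transfer.
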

The following example shows that the Herrero question \cite{limits} holds true even on a subspace of a Hilbert space.
\begin{ex}\label{adjoint} 
There exists an operator $T$ such that both $T$ and $T^*$ are subspace-hypercyclic for some subspaces.  
\end{ex}
\begin{proof}
One can construct a weight sequence $\set{w_n}$ such that $\set{w_n}$ satisfies the conditions of Theorem \ref{forw} for a subspace $\m_1$ and satisfies  the conditions of Proposition \ref{bac} for a subspace $\m_2$. If we set $T$ to have the weight sequence $\set{w_n}$, it immediately follows that both $T$ and $T^*$ are subspaces-transitive operators for $\m_1$ and $\m_2$ respectively .   
\end{proof}
Since $T^n(\m)\subseteq \m$ if and only if $T^{*n}(\m^\perp)\subseteq \m^\perp$, then one may conjecture that $T$ is $\m$-transitive if and only if $T^*$ is $\m^\perp$-transitive. However, that is not true
\begin{ex}
Let $B$ be a unilateral backward shift operator and 
$$\m=\{\set{x_n}_{n=0}^\iy:x_{2n}=0 \mbox{ for all } n\in \N\} \subset \LN$$
Then $2B$ is $\m$-hypercyclic see (\cite[Example 3.7.]{sub hyp}). However $(2B)^*=2F$, where $F$ is the unilateral forward shift, which can not be $\m^\perp$-subspace, since unilateral forward shift can never be subspace-hypercyclic for any subspaces.
\end{ex}
\begin{qu}
If $T$ is $\m_1$-transitive and $T^*$ is $\m_2$-transitive, Is there any relation between $\m_2$ and $\m_1^\perp$?
\end{qu} 
We now turn to the unilateral weighted shift operators acting on $\LN$. Let $B$ be a unilateral backward weighted shift operator with a positive weight sequence $\set{w_n:n\in \N}$ then $B$ is defined by $B e_0=0$ and $B e_n=w_ne_{n-1}$ for all $n\ge 1$, and let $F$ be a unilateral forward weighted shift operator with a positive weight sequence $\set{w_n:n\in \N}$ then $F$ is defined by $F e_n=w_ne_{n+1}$ for all $n\ge 0$. We will suppose that $\G=\left\{m_r: r\in \N \mbox { and } e_{m_r}\in \mbox { Schauder basis for } \m \right\} $.\\

 It is clear that unilateral forward weighted shifts can not be subspace-hypercyclic operators for any subspaces; therefore,  we will characterize only the unilateral backward weighted shifts that are subspace-hypercyclic operators for some subspaces.
\begin{thm}
Let $B$ be a unilateral backward weighted shift operator on $\LN$ with positive weight sequence $\set{w_n:n\in \N}$ and $\m$ be a subspace of $\LN$. Let $\set{n_k}$ be an increasing sequence of positive integers such that $\set{n_k}\to \iy$ as $k\to \iy$ and $B^{n_k}\m \subseteq \m$ for all $k\in \N$. Then $B$ satisfies $\m$-hyperccyclic criterion with respect to the sequence $\set{n_k}$ if and only if there exists an $m_i \in \G$ such that
$$\lim \sup_{n\to \iy}(w_{m_i+1}w_{m_i+2}\ldots w_{m_i+n})\to \iy$$
\end{thm}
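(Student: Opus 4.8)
The plan is to verify the $\m$-hypercyclic criterion (in the form used in the proof of Theorem \ref{forw}) for the ``if'' direction and to read the weight condition off the criterion by a coordinate estimate for the ``only if'' direction. Write $S$ for the right inverse of $B$, so $Se_n=\tfrac1{w_{n+1}}e_{n+1}$, and hence $S^{n}e_{m}=\big(\prod_{j=1}^{n}w_{m+j}\big)^{-1}e_{m+n}$, $B^{n}S^{n}e_m=e_m$, while $B^{n}e_m=0$ as soon as $n>m$. This last fact is what lightens the unilateral case compared with Theorem \ref{forw}: for finitely supported $x$ one has $B^{n_k}x\to0$ automatically once $n_k$ exceeds the largest index in the support of $x$, so only the ``forward'' half of the criterion carries content, and everything collapses to controlling the products $\prod_{j=1}^{n_k}w_{m_r+j}$.

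For the ``only if'' direction I would argue as follows. Suppose $B$ satisfies the $\m$-hypercyclic criterion, with dense set $D_2\subseteq\m$ and maps $S_{n_k}$ obeying $S_{n_k}y\to0$ and $B^{n_k}S_{n_k}y\to y$ for $y\in D_2$. Pick a nonzero $y\in D_2$ and an index $m_i\in\G$ in the support of $y$, so its $m_i$-th coordinate $y_{m_i}$ is nonzero. For any $z$ the $m_i$-th coordinate of $B^{n_k}z$ equals $\big(\prod_{j=1}^{n_k}w_{m_i+j}\big)z_{m_i+n_k}$; applying this to $z=S_{n_k}y$ and using $B^{n_k}S_{n_k}y\to y$ gives $\big(\prod_{j=1}^{n_k}w_{m_i+j}\big)(S_{n_k}y)_{m_i+n_k}\to y_{m_i}\neq0$, which combined with $\|S_{n_k}y\|\ge|(S_{n_k}y)_{m_i+n_k}|\to0$ forces $\prod_{j=1}^{n_k}w_{m_i+j}\to\iy$, in particular $\limsup_{n\to\iy}\prod_{j=1}^{n}w_{m_i+j}=\iy$.

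For the ``if'' direction, fix $m_i\in\G$ with $\limsup_{n\to\iy}\prod_{j=1}^{n}w_{m_i+j}=\iy$ and pass to a subsequence of $\{n_k\}$ (still denoted $\{n_k\}$, so that $B^{n_k}\m\subseteq\m$ is preserved) along which $\prod_{j=1}^{n_k}w_{m_i+j}\to\iy$. Take $D=D_1=D_2$ to be the finitely supported vectors of $\m$ and define $S_{n_k}$ on it by $S_{n_k}e_{m_r}=S^{n_k}e_{m_r}$ extended linearly. Then $B^{n_k}x\to0$ automatically, $B^{n_k}S_{n_k}x=x$ by the identity above, and $B^{n_k}\m\subseteq\m$ is the standing hypothesis; it remains to check $S_{n_k}e_{m_r}\to0$, i.e. $\prod_{j=1}^{n_k}w_{m_r+j}\to\iy$, for every $m_r\in\G$. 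We have it for $m_r=m_i$, and propagating it to all $m_r\in\G$ is the unilateral analogue of Lemma \ref{35}: telescoping rewrites $\prod_{j=1}^{n_k}w_{m_r+j}$ as $\prod_{j=1}^{n_k}w_{m_i+j}$ times a ratio of a fixed product of weights and the ``far-out'' product $\prod_{\ell=m_i+n_k+1}^{m_r+n_k}w_\ell$, and one controls the latter exactly as in \cite[Lemma 3.1.]{H C Bi.}, using $B^{n_k}\m\subseteq\m$.

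The hard part is this last step, and two things need care. First, one must align the sequence along which the weight products blow up with the prescribed $\{n_k\}$ for which $B^{n_k}\m\subseteq\m$; if the $\limsup$ is read over all $n$, the growth need not fall on the values $n_k$ without invoking how $\G$ interacts with $B$. Second — and this is the genuine obstacle — passing from growth of $\prod_{j=1}^{n_k}w_{m_i+j}$ for the single index $m_i$ to growth for every $m_r\in\G$ is not a matter of boundedness, since the correction factor $\prod_{\ell=m_i+n_k+1}^{m_r+n_k}w_\ell$ is a product of finitely many weights far out along the sequence and can a priori be small, so the argument must use the structure of $\m$, mirroring the way Lemma \ref{35} uses invertibility in the bilateral setting. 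If in addition the version of the $\m$-hypercyclic criterion being used demands $S_{n_k}y\in\m$, one must further ensure $e_{m_r+n_k}\in\m$, which is an extra constraint on how $\G$ is built. Once the analogue of Lemma \ref{35} is in place, the conditions of the criterion are met and the theorem follows.
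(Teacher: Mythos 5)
Your ``only if'' half is complete and is actually more direct than the paper's: the paper detours through the density of $\m$-hypercyclic vectors and Proposition \ref{chMD}, approximating $e_{m_i}$ simultaneously by some $x$ and by $B^nx$ and extracting the weight product from the coordinate inequalities (\ref{100})--(\ref{101}), whereas you read $\prod_{j=1}^{n_k}w_{m_i+j}\to\iy$ straight off the criterion maps $S_{n_k}$ via the $m_i$-th coordinate of $B^{n_k}S_{n_k}y$ together with $\norm{S_{n_k}y}\to 0$. Both are sound; yours has the additional merit of producing the divergence along the prescribed sequence $\set{n_k}$ rather than along an unspecified sequence of exponents.

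The ``if'' half is where your proposal is incomplete, and you have put your finger on exactly the right spot. The paper takes $D=D_1=D_2$ to be the finitely supported vectors of $\m$, just as you do, but then only verifies $\norm{x_k}\to 0$ for the vector built from the single index $m_i$ (and even there writes ``by hypothesis'' where the hypothesis is a $\limsup$ over all $n$, not a limit along $\set{n_k}$); it never shows $S^{n_k}e_{m_r}\to 0$ for the remaining $m_r\in\G$, nor that $S^{n_k}e_{m_r}\in\m$. Your diagnosis of the propagation step is correct and it is a genuine gap: $\prod_{j=1}^{n_k}w_{m_r+j}$ differs from $\prod_{j=1}^{n_k}w_{m_i+j}$ by a fixed constant times the far-out factor $\prod_{\ell=m_i+n_k+1}^{m_r+n_k}w_\ell$, and since boundedness of $B$ only gives an upper bound on the weights, that factor can a priori tend to $0$; the argument of \cite[Lemma 3.1]{H C Bi.} (and of Lemma \ref{35}) disposes of it by invertibility, i.e.\ a lower bound on the weights, which is unavailable for a general unilateral backward shift. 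As written, your verification of the criterion is complete only when $\G$ is a singleton; to close the ``if'' direction you must either prove the unilateral analogue of Lemma \ref{35} (using the structure imposed on $\G$ by $B^{n_k}\m\subseteq\m$) or add a hypothesis such as the weights being bounded below, in the spirit of Theorem \ref{84}. You should be aware, however, that the paper's own proof does not supply this step either, so the gap you have isolated is one the published argument shares.
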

\begin{proof}
For the ``if'' part, let $\set{n_k}$ be an increasing sequence of positive integers such that $\set{n_k}\to \iy$ as $k\to \iy$ and $B^{n_k}\m \subseteq \m$ for all $k\in \N$, and let
$D=D_1=D_2$ made up of all finitely supported sequences subsets of $\m$, then  it is clear that $D$ is a dense subset of $\m$. Then for all $x\in D$, $B^{n_k}x=0$, since $x$ has only finite numbers of nonzero elements. Let $x_k=1/({w_{m_i+1}w_{m_i+2}\ldots w_{m_i+n_k}})F^{n_k}y\subseteq \m$. By hypothesis, $\norm{x_k}=1/({|w_{m_i+1}w_{m_i+2}\ldots w_{m_i+n_k}|}) \left\|F^{n_k}y\right\| \to 0$ as $k\to \iy$,  hence the result. \\

For the ``only if'' part, suppose that $\set{n_k}$ is an increasing sequence of positive integers such that $\set{n_k}\to \iy$ as $k\to \iy$ and $B^{n_k}\m \subseteq \m$ for all $k\in \N$, and suppose that $B$ satisfies $\m$-hyperccyclic criterion with respect to the sequence $\set{n_k}$ and $m_i\in \G$ such that $\lim \sup_{n\to \iy}(w_{m_i+1}w_{m_i+2}\ldots w_{m_i+n})\to \iy$. Since the set of all $\m$-hypercyclic operators are dense in $\m$, then by Proposition \ref{chMD} one may find $x\in \m$ and a nonzero $n \in \N$ such that for any $0<\eps<1$
$$\norm{x-e_{m_i}}\le \eps \mbox { and }  \norm{B^nx-e_{m_i}}\le \eps$$
Suppose that $x=(x_0,x_1, \ldots)$. From the first inquality, it follows that 
\begin{eqnarray}\label{100}
|x_i|\le \eps \mbox{ for all } i\in \N; i\neq m_i
\end{eqnarray}
From the second inquality, it follows that 
\begin{eqnarray}\label{101}
|x_{n+m_i}w_{1+m_i}w_{2+m_i}\ldots w_{n+m_i}|\ge 1-\eps 
\end{eqnarray}
From Equations (\ref{100}) and (\ref{101}), we get  $w_{1+m_i}w_{2+m_i}\ldots w_{n+m_i}\ge \frac{1-\eps}{\eps}$. By setting $\eps \to 0$, we get the result.
\end{proof}
We will suppose that $$\G_1=\set{m_r: r\in \N, e_{m_r}\in \mbox {the Schauder bases for } \m_1}$$ and  $$\G_2=\left\{ h_r: r\in \N, e_{h_r}\in \mbox {the Schauder bases for } \m_2 \right\}$$ 
\begin{cor}
Let $B_1$ and $B_2$ be unilateral backward weighted shifts in the Hilbert space $\LN$ with a positive weight sequence
$\set{w_n}_{n\in\N}$ and $\set{a_n}_{n\in\N}$, respectively. Let $\m_1$ and $\m_2$ be subspaces of $\LN$, and let $\set{n_k}$ be an increasing sequence of positive integers such that $\set{n_k}\to \iy$ as $k\to \iy$ and $(B_1 \oplus B_2)^{n_k}(\m_1 \oplus \m_2)\subseteq \m_1 \oplus \m_2$ for all $k\in \N$. Then $B_1 \oplus B_2$ satisfies $\m_1 \oplus \m_2$-hypercyclic criterion with respect to $\set{n_k}$ if and only if there exists $m_i \in \G_1$ and $h_p\in \G_2$ such that 
$$\sup_{n\in \N}\set{\min{(a_{h_p+1}a_{h_p+2}\ldots a_{h_p+n}), (w_{m_i+1}w_{m_i+2}\ldots w_{m_i+n})}}=\iy$$
\end{cor}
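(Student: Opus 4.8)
The plan is to reduce this corollary to the single-operator theorem proved just above, by treating $B_1 \oplus B_2$ as essentially a backward weighted shift and carefully tracking the interaction with the subspace $\m_1 \oplus \m_2$. First I would unwind the $\m$-hypercyclic criterion for $B_1 \oplus B_2$ with respect to $\{n_k\}$: it requires a dense set $D = D_1 = D_2 \subseteq \m_1 \oplus \m_2$ on which $(B_1\oplus B_2)^{n_k} = B_1^{n_k}\oplus B_2^{n_k}$ tends to $0$, together with right-inverse maps $x_k \to x$ with $(B_1\oplus B_2)^{n_k}x_k \to y$, all while $(B_1\oplus B_2)^{n_k}(\m_1\oplus\m_2)\subseteq \m_1\oplus\m_2$. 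As in the theorem's proof, I would take $D$ to consist of finitely supported vectors; on these, $B_i^{n_k}$ annihilates for $k$ large, so the ``forward'' condition is the only real content.

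For the ``if'' direction, given $m_i \in \G_1$, $h_p \in \G_2$ with $\sup_{n}\min\bigl((a_{h_p+1}\cdots a_{h_p+n}),(w_{m_i+1}\cdots w_{m_i+n})\bigr) = \iy$, I would pass to a subsequence of $\{n_k\}$ along which \emph{both} products $w_{m_i+1}\cdots w_{m_i+n_k}$ and $a_{h_p+1}\cdots a_{h_p+n_k}$ tend to $\iy$ (this is where the $\min$ being unbounded is exactly what is needed — it forces simultaneous blow-up along a common index set). Then for $y = y^{(1)}\oplus y^{(2)}$ in the finitely supported dense set, I would define $x_k = \bigl(\tfrac{1}{w_{m_i+1}\cdots w_{m_i+n_k}}F_1^{n_k}y^{(1)}\bigr)\oplus\bigl(\tfrac{1}{a_{h_p+1}\cdots a_{h_p+n_k}}F_2^{n_k}y^{(2)}\bigr)$, where $F_1, F_2$ are the forward shifts with the same weights; by construction $(B_1^{n_k}\oplus B_2^{n_k})x_k = y$ and $\|x_k\| \to 0$, so the criterion holds — modulo checking $x_k \in \m_1\oplus\m_2$, which follows if one arranges $D$ and the indices $m_i, h_p$ compatibly with the Schauder-basis structure as in the single-operator proof.

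For the ``only if'' direction, I would mimic the theorem's argument verbatim in the product: assuming the criterion holds, density of $\m$-hypercyclic vectors plus Proposition~\ref{chMD} yields, for each $\eps \in (0,1)$, a vector $x = x^{(1)}\oplus x^{(2)}$ and an $n$ with $\|x - (e_{m_i}\oplus e_{h_p})\| \le \eps$ and $\|(B_1^n\oplus B_2^n)x - (e_{m_i}\oplus e_{h_p})\| \le \eps$ for suitable $m_i \in \G_1$, $h_p \in \G_2$ coming from the criterion's distinguished coordinates. Projecting onto each summand and arguing coordinate-wise exactly as in (\ref{100})--(\ref{101}) gives both $w_{m_i+1}\cdots w_{m_i+n} \ge \tfrac{1-\eps}{\eps}$ and $a_{h_p+1}\cdots a_{h_p+n} \ge \tfrac{1-\eps}{\eps}$ \emph{for the same $n$}; hence $\min\bigl((a_{h_p+1}\cdots a_{h_p+n}),(w_{m_i+1}\cdots w_{m_i+n})\bigr) \ge \tfrac{1-\eps}{\eps}$, and letting $\eps \to 0$ forces the supremum over $n$ to be $\iy$.

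The main obstacle is the ``if'' direction's passage to a common subsequence: from $\sup_n \min(\cdot,\cdot) = \iy$ one must extract \emph{one} increasing sequence $\{n_k\}$ (compatible with the prescribed invariance hypothesis $(B_1\oplus B_2)^{n_k}(\m_1\oplus\m_2)\subseteq\m_1\oplus\m_2$) along which both weight-products blow up simultaneously — and then show the displayed $x_k$ actually lie in $\m_1 \oplus \m_2$, which requires that $F_i^{n_k}e_{h_p}$ (resp. $F_i^{n_k}e_{m_i}$) land on basis vectors indexed inside $\G_i$; this is precisely the role played by the hypothesis $B^{n_k}\m\subseteq\m$ and is handled as in the single-operator theorem. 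Everything else is a routine bookkeeping lift of that theorem's proof to the orthogonal direct sum, so I would state the corollary with an abbreviated proof referring back to the theorem for the coordinate estimates.
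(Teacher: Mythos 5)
The paper states this corollary without proof, presenting it as an immediate consequence of the preceding theorem on a single unilateral backward weighted shift, and your proposal supplies exactly the derivation the paper intends: lift that theorem's ``if'' and ``only if'' arguments to the direct sum, noting that unboundedness of the minimum forces simultaneous blow-up of both weight products along a common index sequence, and that the ``only if'' direction yields both lower bounds $\ge \frac{1-\eps}{\eps}$ for the same $n$. The two gaps you flag --- compatibility of the blow-up subsequence with the prescribed $\{n_k\}$, and membership of the vectors $x_k$ in $\m_1\oplus\m_2$ --- are inherited verbatim from the paper's own proof of the single-shift theorem, so your argument is faithful to, and no weaker than, the paper's approach.
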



\end{document}